\newtheorem{theorem}{Theorem}
\newcommand{\Map}{\operatorname{Map}}
\newcommand{\bs}{\boldsymbol}
\begin{document}

\noindent

\title { Relative Invariants from Moving Frames on an Extended Manifold}

\author{Leonid Bedratyuk} 
\address{ Khmelnytsky National University, Ukraine}
\email{leonidbedratyuk@khmnu.edu.ua}

\begin{abstract}

A constructive modification of the moving frame method is developed in this paper for the construction of relative invariants of regular Lie group actions. Let a relative invariant $I$ of weight $\omega$ transform according to the rule  
$$  
I(g \cdot \boldsymbol x) = \mu(g, \boldsymbol x)^{\omega} I(\boldsymbol x),  
$$  
where $\mu: G \times \mathcal{M} \to \mathbb{R}^\times$ is a scalar multiplier (1-cocycle). It is shown that the cocycle property of $\mu$ is equivalent to the well-definedness of the twisted group action on the extended manifold $\widehat{\mathcal{M}} = \mathcal{M} \times \mathbb{R}^\times$, and that relative invariants on $\mathcal{M}$ are in one-to-one correspondence with absolute invariants of this action on $\widehat{\mathcal{M}}$.

The main result is that, given a moving frame, the invariantization of the multiplier is a canonical relative invariant of weight $-1$. This enables the constructive realization of any weight and yields an explicit formula for an arbitrary relative invariant in terms of the fundamental absolute invariants and the invariantized multiplier. Examples are provided to demonstrate the application of the proposed approach for the projective group $PGL(3, \mathbb{R})$.

\noindent\textbf{Keywords:}  moving frame algorithm; relative invariants; multiplier; $1$-cocycle; twisted action; group cohomology; invariantization.

\end{abstract}

\maketitle

\section{Introduction}

Let $G$ be an $r$-dimensional Lie group acting smoothly on an $n$-dimensional manifold $\mathcal M$.
This action induces an action of $G$ on the field $\mathcal F(\mathcal M)$ of rational functions on $\mathcal M$.
A function $f\in\mathcal F(\mathcal M)$ is called a \emph{relative invariant} with a smooth multiplier
$\mu:G\times\mathcal M\to\mathbb R^\times$ if
\begin{equation}\label{m_oz}
f(g\cdot \boldsymbol x)=\mu(g,\boldsymbol x)\,f(\boldsymbol x),\qquad \forall\, g\in G,\ \boldsymbol x\in\mathcal M.
\end{equation}
More generally, one can introduce a rational \emph{weight} $\omega\in\mathbb Q$ and replace $\mu$ by the weighted
multiplier, i.e.
\[
f(g\cdot\boldsymbol x)=\mu(g,\boldsymbol x)^{\omega}\,f(\boldsymbol x).
\]
In this terminology, a relative invariant of weight $0$ is an \emph{absolute} invariant.

The classification problem for relative invariants of transformation group actions arises throughout modern
mathematics and its applications. Relative invariants occur naturally in the setting of affine and projective
transformations, where, for instance, area or volume is no longer preserved but is scaled by the determinant of
the transformation matrix. A general description of relative invariants makes it possible to treat a broad range
of invariant structures as sections of vector bundles preserved by the induced group action.
Relative invariants play a fundamental role in differential geometry (invariant vector fields, differential forms,
volume elements) \cite{Graustein, Olver1997, Olver2007, Bott}, in the theory of invariant differential operators
\cite{Helgason}, in the study of invariant metrics and connections \cite{KobayashiNomizu}, in number theory
(automorphic and modular forms, Epstein zeta functions) \cite{Langlands, Miyake}, in classical mechanics
(relative integral invariants of Poincar\'e--Cartan type and symplectic geometry) \cite{AbrahamMarsden, Arnold},
and in applied problems such as computer vision \cite{Olver2023, Wang, Open}, the theory of special functions
(in particular, Mellin--Barnes type integral representations, matrix-argument functions, and Whittaker functions)
\cite{Miller, VilenkinKlimyk}, and quantum mechanics (gauge invariance and the transformation properties of wave
functions viewed as relative invariants) \cite{Weyl}.

Despite their wide applicability, the systematic theory of relative invariants is far less developed than that of
absolute invariants. While the classification of absolute invariants for regular Lie group actions is a direct
consequence of the Frobenius theorem, an analogous general theory for relative invariants has only been achieved
more recently. In particular, \cite{Olver1997, Olver1995} completely solves the general classification problem
for relative invariants of regular multiplier representations of connected Lie group actions, providing an
{explicit formula} for the precise number of functionally independent relative invariants and establishing
their structure. The breadth of applicability is illustrated through a detailed treatment of several important
geometric constructions arising as relative invariants of (vector and, in particular, tensor) bundle actions,
including invariant vector fields and differential forms, purely geometric conditions for the existence of such
fields, and an extensive discussion of applications to invariant connections (including inhomogeneous multiplier
representations).

In practically important problems, however, a central issue remains the {constructive} extraction of
generating relative invariants within a function class suitable for computation; the present work focuses on this
algorithmic aspect. Our goal is to develop a general methodology for constructing relative invariants using the
moving frame method---an effective constructive technique originating in the geometric ideas of Cartan and
developed in modern form by Fels and Olver \cite{Olver1998, Olver1999-1}. The moving frame method has become a
standard tool for producing absolute and differential invariants of regular Lie group actions. At the same time,
its potential for a systematic treatment of {relative} invariants has not been fully exploited in the
literature.

The proposed approach rests on two key ideas that reduce the problem to an explicit constructive algorithm.
First, we pass to the {extended space} $\widehat{\mathcal M} = \mathcal M\times\mathbb R^\times$ and introduce a {twisted
action} of $G$ in which the multiplier $\mu$ appears as the factor governing the transformation of the additional
coordinate. This reduces the construction of relative invariants on $\mathcal M$ to the construction of absolute
invariants for the extended action on $\widehat{\mathcal  M}$. Second, assuming that a moving frame for the action on $M$ has
already been constructed, we show that {invariantization} of the multiplier yields a canonical relative
invariant of weight $-1$. In this way, relative invariants emerge as a natural extension of absolute invariants
within the unified geometric paradigm of the moving frame method.

Our results also generalize and systematize observations that have appeared in concrete examples, in particular
for projective groups, where invariantization of the multiplier agrees with natural frame-dependent quantities
arising in projectively invariant constructions \cite{B-2025, B-2025-1}. Nevertheless, the main emphasis of this
paper is not on classification or global issues, but on the constructive procedure: given a moving frame, how to
efficiently produce relative invariants.

We also obtain a complete description of admissible multipliers via a cohomological interpretation. A multiplier
is naturally viewed as a $1$-cocycle in the multiplicative bar complex of the group action with coefficients in
$\mathcal F(M)^\times$, and the cocycle condition is equivalent to the functional cocycle identity for
multipliers. Using the triviality of the first cohomology group in this setting, we show that every $1$-cocycle is
a $1$-coboundary. Thus, the cohomological approach not only clarifies the geometric meaning of the multiplier as a
``change of gauge'', but also yields a closed-form classification of all multipliers.

\medskip
In a broader geometric context, one encounters the nontrivial {realizability} problem: whether a prescribed
$1$-cocycle can occur as the multiplier of some relative invariant, notably in connection with invariant divisors
and equivariant line bundles; see \cite{Krug}. For regular smooth actions in the class $C^\infty(M)$, local results
were obtained by Fels and Olver \cite{Olver1997}, whereas in the general case no universal criterion is known.
In the present work we remain in a constructive regular setting and in the class of rational functions, where the
existence of a moving frame implies the triviality of the first cohomology and hence the realizability of the
$1$-cocycle.

The paper is organized as follows. Section~1 provides a brief review of the moving frame method for regular Lie
group actions and introduces multipliers; in particular, we present the cohomological interpretation of
multipliers as $1$-cocycles and prove that, in the constructive class considered here, all such cocycles are
coboundaries. In Section~2 we propose a modification of the moving frame method for relative invariants: we
construct the extension $\widehat{\mathcal M}=\mathcal M\times\mathbb R^\times$ with a twisted action, establish
the correspondence between relative invariants on $\mathcal M$ and absolute invariants of the extended action, and
show that invariantization of the multiplier by the moving frame produces a canonical weight $-1$ relative
invariant and leads to an explicit normal form for an arbitrary relative invariant. Section~3 applies the general
results to the diagonal projective action of $PGL(3,\mathbb R)$ on configuration spaces: we construct a moving
frame, derive fundamental joint absolute invariants and an explicit invariantized Jacobian, and then construct a
new family of joint integral projective invariants of images that are invariant under projective deformations of
the domain.

In this paper, with a mild abuse of notation, we identify a point of a manifold with its local coordinates in a
chosen chart defined in a neighborhood of that point.

\section{The Moving Frame Method and Multipliers}

The modern form of the moving frame method was developed in \cite{Olver1998}, \cite{Olver1999-1}, although its origins can be traced back to works from the late 19th century. Below, we briefly present the essence of the method and describe multipliers in cohomological terms.

\subsection{The Moving Frame Method}

Let a Lie group $G$ act smoothly on a manifold $\mathcal M.$ A \textit{(right) moving frame} is a smooth $G$-equivariant map $\rho: \mathcal M \to G$ such that
$$
\rho(g \cdot \boldsymbol x) = \rho(\boldsymbol x) g^{-1},
$$
for all $g \in G$ and all $\boldsymbol x \in \mathcal M$.

The existence conditions for a moving frame are given in the following theorem:

\begin{theorem}[\cite{Olver1999-1}]\label{mt}
If $G$ acts on $\mathcal M$, then a moving frame exists in a neighborhood of
a point $\boldsymbol x \in \mathcal M$ if and only if $G$ acts freely and regularly near $\boldsymbol x$.
\end{theorem}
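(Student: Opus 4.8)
The plan is to prove both implications separately. The reverse direction (freeness and regularity $\Rightarrow$ existence) is the substantive one and is carried out by an explicit normalization construction, while the forward direction (existence $\Rightarrow$ freeness and regularity) follows quickly from the equivariance property alone.

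For necessity, suppose a moving frame $\rho$ exists near $\boldsymbol x$. To see that the action is free, let $g$ lie in the isotropy subgroup of some nearby point $\boldsymbol y$, so that $g \cdot \boldsymbol y = \boldsymbol y$. Applying equivariance gives $\rho(\boldsymbol y) = \rho(g \cdot \boldsymbol y) = \rho(\boldsymbol y)\, g^{-1}$, and cancelling $\rho(\boldsymbol y)$ forces $g = e$; hence every isotropy subgroup near $\boldsymbol x$ is trivial. Regularity then follows because a free action has orbits of constant dimension $r = \dim G$, and the transversal slice determined by the smooth map $\rho$ exhibits the orbit foliation as regular in a neighborhood of $\boldsymbol x$.

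For sufficiency, I would use the cross-section (normalization) method. Since the action is free and regular near $\boldsymbol x$, the orbits form a regular foliation of constant dimension $r$, so one can choose a local cross-section $\mathcal K$: an $(n-r)$-dimensional submanifold through $\boldsymbol x$ that meets each nearby orbit transversally in a single point. Writing $\mathcal K$ as the common zero set of $r$ functions and imposing the $r$ normalization equations requiring that the transformed point $g \cdot \boldsymbol y$ lie on $\mathcal K$, I obtain a system of $r$ equations in the $r$ group parameters. The crucial computation is that freeness renders the Jacobian of this system with respect to the group parameters nonsingular at $(e, \boldsymbol x)$, so the implicit function theorem solves uniquely for $g = \rho(\boldsymbol y)$ as a smooth function of $\boldsymbol y$. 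Equivariance is then immediate from uniqueness: both $\rho(g \cdot \boldsymbol y) \cdot (g \cdot \boldsymbol y)$ and $\rho(\boldsymbol y) \cdot \boldsymbol y$ equal the unique intersection of the common orbit with $\mathcal K$, whence $\rho(g \cdot \boldsymbol y)\, g = \rho(\boldsymbol y)$, i.e. $\rho(g \cdot \boldsymbol y) = \rho(\boldsymbol y)\, g^{-1}$.

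The main obstacle will be the smoothness and well-definedness of $\rho$ through the implicit function theorem: I must verify that transversality of $\mathcal K$ to the orbits, combined with freeness, guarantees the relevant Jacobian is invertible, and I must invoke regularity to ensure that a genuine cross-section — one intersecting each nearby orbit exactly once — exists in a neighborhood rather than merely an infinitesimal transversal at $\boldsymbol x$. Everything else, namely the cancellation arguments and the equivariance verification, is formal once the normalization map is known to be a smooth, uniquely determined group-valued function.
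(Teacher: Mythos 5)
This theorem is imported by the paper from Fels--Olver \cite{Olver1999-1} and is stated without proof, so there is no in-paper argument to compare against; your proposal reproduces the standard cross-section/normalization proof from that reference, and both the necessity-of-freeness cancellation and the sufficiency argument (transversal cross-section, implicit function theorem on the $r$ normalization equations, equivariance from uniqueness of the orbit--cross-section intersection) are the right steps. The one place where your justification as written would fail is the regularity half of necessity: freeness does \emph{not} imply regularity --- the irrational linear flow on the torus is free with orbits of constant dimension, yet no small neighborhood meets an orbit in a connected set --- so the clause ``a free action has orbits of constant dimension'' carries no weight there. The entire burden must fall on the moving frame itself: one should check that $\boldsymbol y \mapsto \bigl(\rho(\boldsymbol y)^{-1},\ \rho(\boldsymbol y)\cdot\boldsymbol y\bigr)$ is a local diffeomorphism onto a product $G \times \mathcal K$ sending orbits to slices $G\times\{\text{pt}\}$, which rectifies the orbit foliation and yields the connected-intersection property in the paper's definition of regularity.
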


The regularity condition means that every point of $\mathcal M$ has arbitrarily small neighborhoods whose intersection with each orbit is a connected subset thereof. In particular, regularity implies that all group orbits are submanifolds of $\mathcal M$ of the same dimension, which coincides with the dimension of the group $G$. The action of $G$ is said to be free if for each point $\boldsymbol x \in \mathcal M$, the stabilizer  
$$
G_{\boldsymbol x} = \{g \in G \mid g \cdot \boldsymbol x = \boldsymbol x \}
$$ 
is trivial.

The action of $G$ on $\mathcal M$ induces an action on the field of rational functions $\mathcal{F}(\mathcal M)$ by the rule
$$
g \cdot F(\boldsymbol x) = F(g^{-1} \boldsymbol x).
$$

This yields the field of (absolute) invariants:
$$
\mathcal{F}(\mathcal M)^G = \{ F \in \mathcal{F}(\mathcal M) \mid g \cdot F = F,\quad \forall g \in G \}.
$$

The existence of a moving frame often allows for the construction of a minimal generating set of invariants for the field $\mathcal{F}(\mathcal M)^G$, using the fact that the moving frame maps all points on an orbit to the same point. Indeed, let $\boldsymbol x, \boldsymbol y \in \mathcal M$ lie on the same orbit, so that $\boldsymbol y = g \cdot \boldsymbol x$ for some $g \in G$. Then, by equivariance:
$$
\rho(\boldsymbol y) \cdot \boldsymbol y
= \rho(g \cdot \boldsymbol x) \cdot (g \cdot \boldsymbol x)
= \big(\rho(\boldsymbol x)\, g^{-1} \big) \cdot (g \cdot \boldsymbol x)
= \rho(\boldsymbol x) \cdot \boldsymbol x.
$$

This fixed point $\boldsymbol{x}^* = \rho(\boldsymbol x) \cdot \boldsymbol x$ is called the \textit{normalization} of the orbit point $\boldsymbol x$. The local coordinates of the normalized point
$$
\boldsymbol{x}^* = (I_1(\boldsymbol x), I_2(\boldsymbol x), \ldots, I_m(\boldsymbol x)), \quad m = \dim \mathcal M,
$$
do not depend on the choice of the orbit representative and form a complete system of invariants, from which $m - r$ invariants constitute a minimal generating set of the invariant field $\mathcal{F}(\mathcal M)^G$ \cite[Theorem~4.5]{Olver1999-1}.

\medskip 
For the practical computation of invariants, one introduces the operation of
\textit{invariantization} of functions.
The invariantization of a function $F \in \mathcal F(\mathcal M)$ with respect to a moving frame $\rho$
is the function $\iota(F) \in \mathcal F(\mathcal M)$ defined by
$$
\iota(F)(\boldsymbol x) = F(\rho(\boldsymbol x) \cdot \boldsymbol x).
$$

It is easy to see that $\iota^2 = \iota$, so that we obtain a projector
$$
\iota : \mathcal F(\mathcal M) \to \mathcal F(\mathcal M)^G.
$$
Thus, invariantization realizes a canonical projection of an arbitrary function onto its
$G$-invariant normalized form. In this sense, the invariantization operator is an analogue
of the Reynolds operator for finite groups: instead of discrete averaging over the group,
it uses geometric normalization.

\medskip

To construct a moving frame, one must determine the group parameters defining the element
$\rho(\boldsymbol x)$ as functions of the local coordinates of the point $\boldsymbol x$.
To this end, one fixes a cross-section $\mathcal K$ by $r$ equations
$$
\mathcal{K} = \{\, x_1 = c_1,\ \dots,\ x_r = c_r \,\} \subset \mathcal M,
$$
where $r = \dim G$ and $c_1, \dots, c_r$ are suitably chosen constants. These equations fix
$r$ coordinates of the normalized point.
The associated right moving frame $\rho(\boldsymbol x) \in G$ is obtained by solving
the \textit{normalization equations}
$$
\big(g \cdot \boldsymbol x\big)_j = c_j, \qquad j = 1, \dots, r.
$$

This is a system of $r$ equations, typically polynomial, in $r$ unknowns—the group parameters.
Theorem~\ref{mt} guarantees that this system admits a unique local solution, which yields the
moving frame $\rho(\boldsymbol x)$. Consequently, for an arbitrary point $\boldsymbol x$,
the coordinates of the normalized point, i.e., the invariantizations of the coordinate functions,
take the form
$$
\boldsymbol{x}^* = \big(c_1, \dots, c_r,\ I_1(\boldsymbol{x}), \dots, I_{m - r}(\boldsymbol{x})\big).
$$

Hence, the $m-r$ invariants $I_1(\boldsymbol{x}), \ldots, I_{m-r}(\boldsymbol{x})$ form a minimal
generating set of the invariant field: they are functionally independent, and any invariant
in $\mathcal{F}(\mathcal M)^G$ is a function of
$I_1(\boldsymbol{x}), \ldots, I_{m-r}(\boldsymbol{x})$.

We note that although the moving frame is defined locally, the invariants obtained by
invariantization are global, since they depend only on the orbit and not on the choice of a
representative point on the orbit.

\medskip

Finally, we remark that in most practically relevant cases the normalization equations are
linear with respect to the group parameters. In particular, this situation is characteristic
of the special projective group $PSL(n,\mathbb{R})$, the affine group $A(n,\mathbb{R})$, and its
subgroups, as shown in \cite{Olver2001}. This allows one to obtain explicit solutions of the
normalization system, in which the moving frame parameters are expressed as rational functions
of the coordinates $\boldsymbol{x}$. Consequently, all objects constructed on this basis—from
absolute invariants to invariantized multipliers—remain within the class of rational functions
on $\mathcal{M}$.

On the other hand, for problems such as the construction of classical invariants of binary forms,
the normalization equations are high-degree polynomial equations, which significantly complicates
the explicit determination of solutions even for binary quadratics; see \cite{Olver1999-2}.
In view of this, in the present work we restrict our attention to the practical case where
$\mathcal{F}(\mathcal{M})$ is the field of rational functions.

\subsection{Multipliers}

Which functions $\mu : G \times \mathcal M \to \mathbb{R}$ can serve as multipliers?
From the identity $g \cdot (h \cdot F) = (gh) \cdot F$, expanding both sides and equating them,
we obtain that multipliers must satisfy the identity
\begin{equation}\label{mult}
\mu(gh, \boldsymbol x) = \mu(g, h \boldsymbol x)\,\mu(h, \boldsymbol x), 
\qquad \mu(e,\boldsymbol x)=1.
\end{equation}

In the classical framework \cite{Olver1997}, multipliers arise naturally from a so-called
{change of gauge} on the space of functions.
Namely, fix an arbitrary nowhere-vanishing function (a gauge factor)
$\eta : \mathcal M \to \mathbb{R}^\times$ and modify the standard action on functions
by multiplication with $\eta$, that is, consider the transformation $F \mapsto \eta F$.
Then the transformed action acquires the factor
$$
\frac{\eta(g\!\cdot\!\boldsymbol x)}{\eta(\boldsymbol x)},
$$
which is easily verified to be a multiplier.
In this subsection, using methods of homological algebra, we show that this construction
in fact produces {all} possible multipliers.

\medskip

We recall the necessary notions.
Let
$$
\mathcal F(\mathcal M)^\times
=
\{\,f\in\mathcal F(\mathcal M)\mid f(\boldsymbol x)\neq 0\ \ \forall \boldsymbol x\in\mathcal M\,\}
$$
be the multiplicative abelian group of nowhere-vanishing functions on $\mathcal M$.
Consider the standard inhomogeneous \emph{multiplicative} bar complex associated with the group action:
$$
C^n = \Map(G^n \times \mathcal M,\mathbb{R}^\times),\qquad
(C^\bullet,d^\bullet):\quad
C^0 \xrightarrow{\,d^0\,} C^1 \xrightarrow{\,d^1\,} C^2 \xrightarrow{\,d^2\,}\cdots,
$$
where $C^0=\mathcal F(\mathcal M)^\times$.
The coboundary operators are given by the formulas (see \cite{Braun}):
$$
(d^0 c)(g,\boldsymbol x)
=
\frac{c(g\!\cdot\!\boldsymbol x)}{c(\boldsymbol x)},
\qquad c\in C^0,
$$
$$
(d^1 c)(g_1,g_2,\boldsymbol x)
=
\frac{c(g_1,g_2\!\cdot\!\boldsymbol x)\,c(g_2,\boldsymbol x)}
{c(g_1 g_2,\boldsymbol x)},
\qquad c\in C^1,
$$
and for $n\ge 1$ and $c\in C^n$ we have the standard expression
$$
\begin{aligned}
(d^n c)(g_1,\dots,g_{n+1};\boldsymbol x)
&=
c(g_2,\dots,g_{n+1};\boldsymbol x)
\prod_{i=1}^{n}
c(g_1,\dots,g_i g_{i+1},\dots,g_{n+1};\boldsymbol x)^{(-1)^i} \\
&\hspace{2.2cm}\cdot
c(g_1,\dots,g_n;\,g_{n+1}\!\cdot\!\boldsymbol x)^{(-1)^{n+1}}.
\end{aligned}
$$
These operators satisfy the identity $d^{n+1}\circ d^n = 1$
(the identity element, corresponding to $0$ in additive notation),
so the cocycle and coboundary groups are well defined:
$$
Z^n(G,\mathcal F(\mathcal M)^\times)=\ker d^n,
\qquad
B^n(G,\mathcal F(\mathcal M)^\times)=\operatorname{Im} d^{n-1},
\qquad
H^n = Z^n/B^n.
$$

In particular, the $1$--cocycle condition $(d^1 c)(g_1,g_2,\boldsymbol x)=1$,
written explicitly as
$$
\frac{c(g_1,g_2\!\cdot\!\boldsymbol x)\,c(g_2,\boldsymbol x)}
{c(g_1 g_2,\boldsymbol x)}=1,
$$
coincides with the multiplier identity \eqref{mult}.

\medskip

The following theorem establishes the classification of all multipliers by exploiting the triviality of the higher cohomology groups $H^n(G, \mathcal{F}(\mathcal{M})^\times)$ for $n \geq 1$, which is a direct consequence of the existence of a moving frame \cite[Section~5]{B-2025}.

\begin{theorem} Suppose that the action of $G$ on $\mathcal{M}$ is free and regular in a neighborhood of a point $\boldsymbol{x} \in \mathcal{M}$, and there exists a right moving frame $\rho: \mathcal{M} \to G$ defined in this neighborhood.
The following statements hold:
\begin{enumerate}
\item[(i)]
For any $f\in\mathcal F(\mathcal M)^\times$, the function
$$
\mu(g,\boldsymbol x)
=
\frac{f(g\!\cdot\!\boldsymbol x)}{f(\boldsymbol x)}
$$
is a multiplier, i.e., a $1$--cocycle: $d^1\mu = 1$.

\item[(ii)]
For every multiplier $\mu$ there exists $f\in\mathcal F(\mathcal M)^\times$ such that
$$
\mu(g,\boldsymbol x)
=
\frac{f(g\!\cdot\!\boldsymbol x)}{f(\boldsymbol x)}.
$$
\end{enumerate}
\end{theorem}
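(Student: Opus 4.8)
The plan is to treat the two parts separately, observing that (i) is the inclusion $B^1\subseteq Z^1$ and (ii) the reverse inclusion $Z^1\subseteq B^1$, so that together they assert $H^1(G,\mathcal F(\mathcal M)^\times)=1$ in the present setting. For (i) I would verify the multiplier identity \eqref{mult} by direct substitution: with $\mu(g,\boldsymbol x)=f(g\!\cdot\!\boldsymbol x)/f(\boldsymbol x)$ one computes
$$
\mu(g,h\!\cdot\!\boldsymbol x)\,\mu(h,\boldsymbol x)
=\frac{f(gh\!\cdot\!\boldsymbol x)}{f(h\!\cdot\!\boldsymbol x)}\cdot\frac{f(h\!\cdot\!\boldsymbol x)}{f(\boldsymbol x)}
=\frac{f(gh\!\cdot\!\boldsymbol x)}{f(\boldsymbol x)}
=\mu(gh,\boldsymbol x),
$$
and $\mu(e,\boldsymbol x)=1$ is immediate. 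This is exactly the identity $d^1\circ d^0=1$ recorded after the bar complex, and it needs no hypothesis beyond $f$ being nowhere vanishing.

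The substance is (ii), where the plan is to produce $f$ explicitly by \emph{invariantizing the multiplier itself}. I would set
$$
f(\boldsymbol x)=\mu\big(\rho(\boldsymbol x),\,\boldsymbol x\big)^{-1},
$$
which is nowhere vanishing since $\mu$ is $\mathbb R^\times$-valued, and which lies in $\mathcal F(\mathcal M)^\times$ because the frame parameters $\rho(\boldsymbol x)$ are rational in the coordinates in the class fixed above. The verification that this $f$ realizes $\mu$ as a coboundary rests on two ingredients. First, applying the cocycle identity to the pair $(g^{-1},g)$ together with $\mu(e,\boldsymbol x)=1$ yields the inversion rule
$$
\mu\big(g^{-1},\,g\!\cdot\!\boldsymbol x\big)=\mu(g,\boldsymbol x)^{-1}.
$$
Second, equivariance of the frame gives $\rho(g\!\cdot\!\boldsymbol x)=\rho(\boldsymbol x)\,g^{-1}$, so expanding $f(g\!\cdot\!\boldsymbol x)=\mu(\rho(\boldsymbol x)g^{-1},\,g\!\cdot\!\boldsymbol x)^{-1}$ and splitting the first argument as $\rho(\boldsymbol x)\cdot g^{-1}$ acting through $g^{-1}\!\cdot\!(g\!\cdot\!\boldsymbol x)=\boldsymbol x$ gives
$$
\mu\big(\rho(\boldsymbol x)g^{-1},\,g\!\cdot\!\boldsymbol x\big)
=\mu\big(\rho(\boldsymbol x),\,\boldsymbol x\big)\,\mu\big(g^{-1},\,g\!\cdot\!\boldsymbol x\big).
$$
Combining the two displays, the factor $\mu(\rho(\boldsymbol x),\boldsymbol x)$ cancels and the inversion rule supplies the remaining sign, leaving $f(g\!\cdot\!\boldsymbol x)/f(\boldsymbol x)=\mu(g,\boldsymbol x)$, as required.

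I expect no genuine analytic obstacle: once the candidate $f$ is identified, the verification is a short manipulation of \eqref{mult}, and the single conceptual point is that the equivariance relation $\rho(g\!\cdot\!\boldsymbol x)=\rho(\boldsymbol x)g^{-1}$ serves as the contracting homotopy that trivializes the class of $\mu$. The only step demanding care is the assertion $f\in\mathcal F(\mathcal M)^\times$ rather than merely $f$ smooth: this is precisely where the standing hypothesis of a rational moving frame (with $\mu$ rational in the coordinates) enters, and I would flag that the whole construction is local, valid on the neighborhood on which the free, regular action admits the frame $\rho$ guaranteed by Theorem~\ref{mt}.
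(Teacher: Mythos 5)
Your part (i) coincides with the paper's: both are the direct verification that $d^{1}\circ d^{0}=1$. For part (ii), however, you take a genuinely different and more self-contained route. The paper's proof is essentially a citation: it invokes the triviality of $H^{1}(G,\mathcal F(\mathcal M)^\times)$ (established elsewhere, in \cite{B-2025}, as a consequence of the existence of a moving frame) to conclude $Z^{1}=B^{1}$, and then reads off the existence of $f$ abstractly. You instead exhibit the coboundary witness explicitly, $f(\boldsymbol x)=\mu(\rho(\boldsymbol x),\boldsymbol x)^{-1}$, and verify $d^{0}f=\mu$ by combining the inversion rule $\mu(g^{-1},g\!\cdot\!\boldsymbol x)=\mu(g,\boldsymbol x)^{-1}$ with the equivariance $\rho(g\!\cdot\!\boldsymbol x)=\rho(\boldsymbol x)g^{-1}$ and the cocycle identity; the computation is correct and is precisely the inverse of the identity the paper later proves as Theorem~\ref{inv-jak}$(i)$. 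What your approach buys is a constructive, reference-free proof that in effect re-derives the cited cohomological vanishing and makes the canonical generator $\mu(\rho(\boldsymbol x),\boldsymbol x)$ appear already at this stage; what the paper's approach buys is brevity and an explicit placement of the statement inside the bar-complex formalism it has just set up. Your caveats are also well placed: the hypothesis that a rational moving frame exists is exactly what guarantees $f\in\mathcal F(\mathcal M)^\times$ in the rational class, and the construction is local to the neighborhood where $\rho$ is defined --- the same locality that underlies the paper's cited vanishing of $H^{1}$.
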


\begin{proof}
\textit{(i)}
Take $\mu=d^0 f$. Then, by definition of $d^0$ and $d^1$,
$$
(d^1\mu)(g_1,g_2,\boldsymbol x)
=
\frac{\mu(g_1,g_2\!\cdot\!\boldsymbol x)\,\mu(g_2,\boldsymbol x)}
{\mu(g_1 g_2,\boldsymbol x)}
=
\frac{
\displaystyle
\frac{f(g_1 g_2\!\cdot\!\boldsymbol x)}{f(g_2\!\cdot\!\boldsymbol x)}
\cdot
\frac{f(g_2\!\cdot\!\boldsymbol x)}{f(\boldsymbol x)}
}{
\displaystyle
\frac{f(g_1 g_2\!\cdot\!\boldsymbol x)}{f(\boldsymbol x)}
}
=1.
$$
Hence $\mu$ is a $1$--cocycle, i.e., a multiplier.

\smallskip
\textit{(ii)}
Let $\mu$ be an arbitrary multiplier. This is equivalent to the condition
$d^1\mu = 1$, i.e., $\mu\in \ker d^1 = Z^1$.
Since the first cohomology group is trivial, $H^1(C^\bullet)=1$,
we have $Z^1 = B^1 = \operatorname{Im} d^0$,
so every $1$--cocycle is a $1$--coboundary.
Therefore there exists $f\in C^0=\mathcal F(\mathcal M)^\times$ such that $\mu=d^0 f$,
which is precisely the formula
$$
\mu(g,\boldsymbol x)
=
\frac{f(g\!\cdot\!\boldsymbol x)}{f(\boldsymbol x)}.
$$
\end{proof}

\medskip

Thus, we have provided a complete characterization of the solutions to \eqref{mult} in the class of rational functions:
all of its solutions are parametrized by nowhere-vanishing functions on the manifold
$\mathcal M$.



\section{Modification of the Moving Frame Method for Relative Invariants}

The moving frame method in its standard form describes the field of absolute invariants for a given group action on a manifold. To systematically incorporate relative invariants with a fixed multiplier $\mu(g,\boldsymbol x)$, we extend the base manifold by adding one additional coordinate, and use $\mu$ to define a well-defined twisted group action on the extended manifold. On this extended space, relative invariants are naturally interpreted as absolute invariants, which allows us to directly apply the standard procedure of choosing a cross-section, constructing a moving frame, and performing invariantization. In addition, we obtain an explicit weight generator that complements the fundamental absolute invariants of the base manifold.

\subsection{Why Can't We Stay Within $\mathcal M$?}

Can one force the moving frame method, which by its nature constructs absolute invariants, to automatically generate relative invariants while remaining within $\mathcal M$?
There might be a temptation to define a new group action on functions in such a way that the absolute invariants of this new action coincide with the relative invariants under the standard action \eqref{m_oz}. This can indeed be done, for example, by defining a twisted action $\circ$ as follows:
$$
g \circ F(\boldsymbol x) = \mu(g^{-1}, \boldsymbol x)^{-1} F(g^{-1} \boldsymbol x), 
\qquad g \in G,\quad F \in \mathcal F(\mathcal M).
$$
Then the condition of absolute invariance $g \circ F = F$ is equivalent to
$$
\mu(g^{-1}, \boldsymbol x)^{-1} F(g^{-1} \boldsymbol x) = F(\boldsymbol x),
$$
or, after replacing $g \mapsto g^{-1}$,
$$
F(g \boldsymbol x) = \mu(g, \boldsymbol x)\, F(\boldsymbol x),
\qquad \forall\, g \in G,\ \boldsymbol x \in \mathcal M.
$$
Thus, an absolute invariant $F$ with respect to the $\circ$-action is exactly a relative invariant with multiplier $\mu$.

However, this $\circ$-action is defined only on the function space $\mathcal F(\mathcal M)$ and cannot be descended to an action on the manifold $\mathcal M$ itself. The reason is simple: any group action on $\mathcal M$ induces an automorphism of the function field $\mathcal F(\mathcal M)$ that preserves the multiplicative identity and products. But for the $\circ$-action, we have
$$
(g \circ 1)(\boldsymbol x) = \mu(g^{-1}, \boldsymbol x)^{-1} \neq 1,
\quad\text{and in general,}\quad
g \circ (FG) \neq (g \circ F)(g \circ G).
$$
Hence, such a twisting of the action only at the level of functions does not define a geometric group action on $\mathcal M$, and therefore the moving frame method cannot be directly applied on $\mathcal M$ to compute relative invariants in this way.

To nonetheless make use of the moving frame method, in the next subsection we introduce a correct geometric lifting of this action to an extended manifold by adding one additional “weight” coordinate, on which the multiplier $\mu$ is realized as a standard group action.

\subsection{Moving Frame on the Extended Manifold $\mathcal M$}

A fully geometric way to handle multiplier representations is to pass from the base manifold
\(\mathcal M\) to an {extended} manifold in which the multiplier becomes part of an ordinary group action.
This idea is already present in \cite{Olver1995}: given a multiplier, one considers the extended space
\(\mathcal M \times \mathbb{R}^\times\) and the extended action
\[
g : (\boldsymbol x, u) \longmapsto \bigl(g \cdot \boldsymbol x,\ \mu(g,\boldsymbol x)\,u\bigr),
\]
which is a genuine (local) group action precisely when \(\mu\) satisfies the cocycle identity~\eqref{mult}.

In what follows, we push this construction one step further: we show that after choosing a suitable
cross-section in the extended space, the {standard moving frame algorithm} can be applied without
modification to produce an explicit generating set for the field of \(\mu\)-relative invariants $\mathcal F(\mathcal M)^G_\mu$.
In particular, the relative invariants on \(\mathcal M\) are recovered from the absolute invariants of
the extended action together with a single weight generator obtained by invariantizing the added coordinate.

\medskip

Fix a smooth multiplier
\[
\mu : G \times \mathcal M \to \mathbb{R}^\times, \qquad
\mu(gh,x) = \mu\bigl(g, h \cdot x\bigr)\,\mu(h,x), \quad
\mu(e,x) = 1.
\]

Let $\boldsymbol x = (x_1, \dots, x_m)$ be a local coordinate representation of a point $\boldsymbol x \in \mathcal M$.
Consider the new manifold defined as the Cartesian product of $\mathcal M$ and the punctured real line:
\[
\widehat{\mathcal M} = \mathcal M \times \mathbb{R}^\times, \qquad \text{where } \mathbb{R}^\times = \mathbb{R} \setminus \{0\}.
\]
To the local coordinate system $(x_1, \dots, x_m)$ on $\mathcal M$, we add a new coordinate function $x_{m+1} \in \mathbb{R}^\times$.
Define the action of $G$ on a point $\boldsymbol{\hat x} \in \widehat{\mathcal M}$ by
\begin{equation}\label{twisted-xn1}
g \cdot \boldsymbol{\hat x}
\;=\;
\bigl(g \cdot \boldsymbol x,\ x_{m+1} \cdot \mu(g,\boldsymbol x)\bigr),
\quad
\boldsymbol{\hat x} = (x_1, \dots, x_m, x_{m+1}) \in \widehat{\mathcal M}.
\end{equation}

The canonical projection \(\pi : \widehat{\mathcal M} \to \mathcal M\) is given by \(\pi(\boldsymbol{\hat x}) = \boldsymbol x\).

\begin{theorem}\label{twisted-action}
The following statements hold:
\begin{enumerate}\itemsep=2pt
\item[(\textit{i})] The map \eqref{twisted-xn1} defines a smooth action of $G$ on $\widehat{\mathcal M}$.
\item[(\textit{ii})] The projection $\pi$ is a $G$-equivariant map.
\item[(\textit{iii})] If the action of $G$ on $\mathcal M$ is free and regular near a point $\boldsymbol x$, then the induced action of $G$ on $\widehat{\mathcal M}$ is also free and regular near any point in $\pi^{-1}(\boldsymbol x)$.
\end{enumerate}
\end{theorem}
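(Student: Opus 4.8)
The plan is to dispatch (i) and (ii) by direct verification of the defining axioms, and to reserve the real work for (iii), where freeness transfers trivially from the base but regularity demands a more structural argument. For (i) I would check the two group-action axioms for the formula \eqref{twisted-xn1}. The identity axiom $e \cdot \boldsymbol{\hat x} = \boldsymbol{\hat x}$ is immediate from the normalization $\mu(e,\boldsymbol x)=1$. The associativity axiom $(gh)\cdot\boldsymbol{\hat x} = g\cdot(h\cdot\boldsymbol{\hat x})$ is the heart of the matter: expanding the right-hand side produces the last coordinate $x_{m+1}\,\mu(h,\boldsymbol x)\,\mu(g,h\cdot\boldsymbol x)$, while the left-hand side gives $x_{m+1}\,\mu(gh,\boldsymbol x)$, so the two agree precisely because of the cocycle identity \eqref{mult}. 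Smoothness is inherited from smoothness of the base action and of $\mu$. Part (ii) is then a one-liner: $\pi(g\cdot\boldsymbol{\hat x}) = g\cdot\boldsymbol x = g\cdot\pi(\boldsymbol{\hat x})$ by the very definition of the projection.

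For (iii), freeness is the easy half. If $g$ fixes $\boldsymbol{\hat x}$, then comparing first coordinates gives $g\cdot\boldsymbol x = \boldsymbol x$, and since the base action is free near $\boldsymbol x$ this forces $g=e$. Equivalently, the stabilizer of $\boldsymbol{\hat x}$ injects into the (trivial) stabilizer of $\pi(\boldsymbol{\hat x})$; the added $\mathbb{R}^\times$-coordinate is never even needed here.

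Regularity is where I expect the genuine difficulty to lie. A naive attempt to build regular neighborhoods upstairs as products $U\times V$ of a regular neighborhood $U\subset\mathcal M$ with a small interval $V\subset\mathbb{R}^\times$ runs into trouble: the orbit through $\boldsymbol{\hat x}$ is the graph over the base orbit of the map $g\cdot\boldsymbol x\mapsto x_{m+1}\,\mu(g,\boldsymbol x)$, and constraining the last coordinate to $V$ can slice a connected arc of the base orbit into several pieces. To sidestep this, I would lift the moving frame. By Theorem~\ref{mt}, the hypothesis that $G$ acts freely and regularly near $\boldsymbol x$ is equivalent to the existence of a right moving frame $\rho:\mathcal M\to G$ on a neighborhood $W$ of $\boldsymbol x$. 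Set $\hat\rho = \rho\circ\pi$, which is smooth and defined on the open set $\pi^{-1}(W)\supset\pi^{-1}(\boldsymbol x)$. Using equivariance of $\pi$ (part (ii)) and of $\rho$, I compute $\hat\rho(g\cdot\boldsymbol{\hat x}) = \rho(g\cdot\boldsymbol x) = \rho(\boldsymbol x)\,g^{-1} = \hat\rho(\boldsymbol{\hat x})\,g^{-1}$, so $\hat\rho$ is a right moving frame for the twisted action near every point of $\pi^{-1}(\boldsymbol x)$. Applying the converse direction of Theorem~\ref{mt} to $\hat\rho$ then yields freeness and regularity there, completing (iii) and reproving freeness as a bonus.

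The main obstacle is thus regularity, and the decisive choice is to route it through Theorem~\ref{mt} rather than argue it by hand: pulling back the base frame converts a delicate connectivity statement about orbits in $\widehat{\mathcal M}$ into a routine equivariance check, since the theorem already packages regularity into the existence of a frame. This also keeps the argument squarely inside the moving-frame framework the rest of the paper depends on.
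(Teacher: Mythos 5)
Your proof is correct, and parts (i), (ii) and the freeness half of (iii) coincide with the paper's argument. For regularity, however, you take a genuinely different route. The paper argues by hand: it first shows $\pi$ restricts to a diffeomorphism from the orbit $\mathcal O_{\widehat{\mathcal M}}(\boldsymbol{\hat x})$ onto $\mathcal O_{\mathcal M}(\boldsymbol x)$ (with inverse $\boldsymbol y\mapsto(\boldsymbol y,\,x_{m+1}\mu(g_{\boldsymbol y},\boldsymbol x))$), and then builds the product neighborhood $\widehat U=U\times(x_{m+1}-\varepsilon,\,x_{m+1}+\varepsilon)$ and claims $\widehat U\cap\mathcal O_{\widehat{\mathcal M}}(\boldsymbol{\hat x})$ is carried diffeomorphically onto the connected set $U\cap\mathcal O_{\mathcal M}(\boldsymbol x)$. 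This is precisely the naive construction you flag as delicate, and your worry is not idle: the paper's map $\Psi(\boldsymbol y)=(\boldsymbol y,\,x_{m+1}\mu(g_{\boldsymbol y},\boldsymbol x))$ need not land in $\widehat U$ when $\mu(g_{\boldsymbol y},\boldsymbol x)$ drifts away from $1$, so the asserted surjectivity of $\pi$ on $\widehat U\cap\mathcal O_{\widehat{\mathcal M}}(\boldsymbol{\hat x})$ requires an extra shrinking argument that the paper leaves implicit. Your alternative — lift the frame as $\hat\rho=\rho\circ\pi$, verify $\hat\rho(g\cdot\boldsymbol{\hat x})=\hat\rho(\boldsymbol{\hat x})\,g^{-1}$ from the equivariance of $\rho$ and $\pi$, and invoke the ``frame exists $\Rightarrow$ free and regular'' direction of Theorem~\ref{mt} on $\widehat{\mathcal M}$ — sidesteps the connectivity bookkeeping entirely and is not circular, since the construction of $\hat\rho$ uses only part (ii) and the base hypothesis. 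What you lose is the explicit geometric picture of the lifted orbit as a graph over the base orbit (which the paper reuses informally), and you partially pre-empt Theorem~\ref{rho}, whose content is exactly that $\widehat\rho=\rho\circ\pi$; what you gain is a shorter, more robust argument that stays inside the moving-frame formalism. Both proofs are acceptable.
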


\begin{proof}
(\textit{i}) For the identity $e \in G$, we compute:
$$
e \cdot \boldsymbol{\hat x}
= (e \cdot \boldsymbol x,\ x_{m+1} \mu(e,\boldsymbol x))
= (\boldsymbol x,\ x_{m+1}) = \boldsymbol{\hat x},
$$
since $\mu(e, \boldsymbol x) = 1$.

For any $g,h \in G$, using the cocycle identity for $\mu$, we have
\begin{align*}
g \cdot (h \cdot \boldsymbol{\hat x})
&= g \cdot \bigl(h \cdot \boldsymbol x,\ x_{m+1} \mu(h,\boldsymbol x)\bigr) \\
&= \bigl(gh \cdot \boldsymbol x,\ x_{m+1} \mu(h,\boldsymbol x)\, \mu(g, h \cdot \boldsymbol x)\bigr) \\
&= \bigl(gh \cdot \boldsymbol x,\ x_{m+1} \mu(gh,\boldsymbol x)\bigr)
= (gh) \cdot \boldsymbol{\hat x}.
\end{align*}
Smoothness follows from the smoothness of the action on $\mathcal M$ and of the function $\mu$.

\medskip

(\textit{ii}) To verify equivariance of $\pi$:
$$
\pi(g \cdot \boldsymbol{\hat x}) = \pi(g \cdot \boldsymbol x,\ x_{m+1} \mu(g,\boldsymbol x)) = g \cdot \boldsymbol x = g \cdot \pi(\boldsymbol{\hat x}).
$$

\medskip

(\textit{iii}) Freeness: if the stabilizer $G_{\boldsymbol x} = \{ g \in G \mid g \cdot \boldsymbol x = \boldsymbol x \}$ is trivial, then so is
\[
G_{\boldsymbol{\hat x}} = \{ g \in G \mid g \cdot \boldsymbol x = \boldsymbol x \ \text{and} \ x_{m+1} \mu(g, \boldsymbol x) = x_{m+1} \}
= \{ g \in G_{\boldsymbol x} \mid \mu(g,\boldsymbol x) = 1 \} = \{e\}.
\]

\medskip

Semi-regularity: The orbit of a point $\boldsymbol{\hat x} \in \widehat{\mathcal M}$ takes the form
\[
\mathcal O_{\widehat{\mathcal M}}(\boldsymbol{\hat x})
= \{ (g \cdot \boldsymbol x,\ x_{m+1} \mu(g,\boldsymbol x)) \mid g \in G \}.
\]
Since $\pi$ is $G$-equivariant, it maps orbits to orbits, and the restriction
\[
\pi : \mathcal O_{\widehat{\mathcal M}}(\boldsymbol{\hat x}) \to \mathcal O_{\mathcal M}(\boldsymbol x)
\]
is bijective. Indeed, for each $\boldsymbol y \in \mathcal O_{\mathcal M}(\boldsymbol x)$, there exists a unique $g_{\boldsymbol y} \in G$ such that $\boldsymbol y = g_{\boldsymbol y} \cdot \boldsymbol x$; then, using the cocycle identity for $\mu$ and the equivariance of $\pi$, the inverse map to $\pi|_{\mathcal O_{\widehat{\mathcal M}}(\boldsymbol{\hat x})}$ is given by
\[
\boldsymbol y \mapsto \bigl(\boldsymbol y,\ x_{m+1} \mu(g_{\boldsymbol y}, \boldsymbol x)\bigr).
\]
Moreover, this inverse is smooth, hence $\pi$ restricts to a diffeomorphism between orbits, and
\[
\dim \mathcal O_{\widehat{\mathcal M}}(\boldsymbol{\hat x}) = \dim \mathcal O_{\mathcal M}(\boldsymbol x),
\]
so semi-regularity is preserved.

\medskip

Regularity: Let the action of $G$ be free and regular in a neighborhood $U \subset \mathcal M$ of $\boldsymbol x$, so that the intersection $U \cap \mathcal O_{\mathcal M}(\boldsymbol x)$ is connected.
Choose $\varepsilon > 0$ such that the interval $(x_{m+1} - \varepsilon,\ x_{m+1} + \varepsilon) \subset \mathbb{R}^\times$ and set
\[
\widehat U = U \times (x_{m+1} - \varepsilon,\ x_{m+1} + \varepsilon) \subset \widehat{\mathcal M}.
\]
Then $\pi$ maps $\widehat U \cap \mathcal O_{\widehat{\mathcal M}}(\boldsymbol{\hat x})$ onto $U \cap \mathcal O_{\mathcal M}(\boldsymbol x)$.
Since the action is free, for each $\boldsymbol y \in U \cap \mathcal O_{\mathcal M}(\boldsymbol x)$ there exists a unique $g_{\boldsymbol y} \in G$ such that $\boldsymbol y = g_{\boldsymbol y} \cdot \boldsymbol x$. Due to the regularity of the orbit map $G \to \mathcal O_{\mathcal M}(\boldsymbol x)$, the map $\boldsymbol y \mapsto g_{\boldsymbol y}$ can be chosen locally smoothly. Define the map
\[
\Psi : U \cap \mathcal O_{\mathcal M}(\boldsymbol x) \longrightarrow \widehat U \cap \mathcal O_{\widehat{\mathcal M}}(\boldsymbol{\hat x}), \qquad
\Psi(\boldsymbol y) = \bigl(\boldsymbol y,\ x_{m+1} \mu(g_{\boldsymbol y}, \boldsymbol x)\bigr).
\]
This map is well-defined and smooth, and its inverse is given by $\pi$, since
\[
\pi(\Psi(\boldsymbol y)) = \boldsymbol y, \qquad \Psi(\pi(\boldsymbol{\hat y})) = \boldsymbol{\hat y}
\]
for all $\boldsymbol{\hat y} \in \widehat U \cap \mathcal O_{\widehat{\mathcal M}}(\boldsymbol{\hat x})$.
Hence $\Psi$ is a diffeomorphism.

Thus, $\widehat U \cap \mathcal O_{\widehat{\mathcal M}}(\boldsymbol{\hat x})$ is diffeomorphic to a connected subset, and the group action on $\widehat{\mathcal M}$ is regular near $\boldsymbol{\hat x}$, as claimed.
\end{proof}

\medskip
We have shown that the existence of a local moving frame on $\mathcal M$ implies its existence on the
extended manifold $\widehat{\mathcal M}$. We now show that, in fact, these moving frames correspond to
the same group element.

\begin{theorem}\label{rho}
Suppose that the action of $G$ on $\mathcal M$ is free and regular in a neighborhood of a point
$\boldsymbol x \in \mathcal M$, and let $\rho : \mathcal M \to G$ be the corresponding right moving frame.
Then, in a neighborhood of the point $\boldsymbol{\hat x} = (\boldsymbol x, x_{m+1}) \in \widehat{\mathcal M}$,
there exists a right moving frame $\widehat{\rho} : \widehat{\mathcal M} \to G$ such that
\[
\widehat{\rho}(\boldsymbol{\hat x}) = \rho(\boldsymbol x).
\]
Moreover, locally $\widehat{\rho} = \rho \circ \pi$.
\end{theorem}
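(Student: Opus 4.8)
The plan is to take $\widehat{\rho} := \rho \circ \pi$ as an explicit candidate and to verify directly that it is a right moving frame on $\widehat{\mathcal M}$. Since the definition of a moving frame demands only smoothness together with the right-equivariance relation $\widehat{\rho}(g \cdot \boldsymbol{\hat x}) = \widehat{\rho}(\boldsymbol{\hat x})\,g^{-1}$, exhibiting such a map simultaneously proves existence and pins down its value. Smoothness of $\widehat{\rho}$ is immediate, being the composition of the smooth projection $\pi$ with the smooth map $\rho$ defined on the neighborhood $U \ni \boldsymbol x$ supplied by hypothesis; consequently $\widehat{\rho}$ is defined on the saturated neighborhood $\pi^{-1}(U) \ni \boldsymbol{\hat x}$, on which Theorem~\ref{twisted-action}(iii) guarantees that the extended action is free and regular, so that by Theorem~\ref{mt} a frame is indeed available there.

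The core of the argument is the equivariance check. Using that $\pi$ is $G$-equivariant (Theorem~\ref{twisted-action}(ii)) and then the moving-frame property of $\rho$ on $\mathcal M$, I would compute, for $g \in G$,
$$
\widehat{\rho}(g \cdot \boldsymbol{\hat x})
= \rho\bigl(\pi(g \cdot \boldsymbol{\hat x})\bigr)
= \rho\bigl(g \cdot \pi(\boldsymbol{\hat x})\bigr)
= \rho(g \cdot \boldsymbol x)
= \rho(\boldsymbol x)\,g^{-1}
= \widehat{\rho}(\boldsymbol{\hat x})\,g^{-1},
$$
which is exactly the required right-equivariance. Evaluating at $\boldsymbol{\hat x} = (\boldsymbol x, x_{m+1})$ then gives $\widehat{\rho}(\boldsymbol{\hat x}) = \rho(\pi(\boldsymbol{\hat x})) = \rho(\boldsymbol x)$, which is the asserted normalization, and the relation $\widehat{\rho} = \rho \circ \pi$ is the construction itself.

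To make transparent why the weight coordinate drops out, I would also record the constructive cross-section viewpoint. If $\rho$ arises from the cross-section $\mathcal K = \{x_1 = c_1, \dots, x_r = c_r\}$, the natural cross-section on $\widehat{\mathcal M}$ is $\widehat{\mathcal K} = \mathcal K \times \mathbb{R}^\times$, cut out by the very same $r$ equations; its codimension $r$ matches the common orbit dimension established in the proof of Theorem~\ref{twisted-action}(iii). Since $r \le m$, these equations constrain only the first $m$ coordinates, and by \eqref{twisted-xn1} the first $m$ coordinates of $g \cdot \boldsymbol{\hat x}$ coincide with those of $g \cdot \boldsymbol x$. Hence the normalization system $(g \cdot \boldsymbol{\hat x})_j = c_j$, $j = 1,\dots,r$, is literally identical to the one solved on $\mathcal M$, and its unique local solution is $\rho(\boldsymbol x)$, manifestly independent of $x_{m+1}$.

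I do not anticipate a genuine obstacle here: the statement is a direct consequence of the equivariance of $\pi$ combined with the moving-frame relation for $\rho$. The one point requiring care is conceptual rather than computational — one must observe that the moving-frame definition imposes no constraint coupling the new coordinate $x_{m+1}$ to the frame, so that a frame which is constant along the fibers of $\pi$ is admissible. The equivariance of $\pi$ from Theorem~\ref{twisted-action}(ii) is precisely what licenses pulling $\rho$ back through $\pi$ without disturbing right-equivariance.
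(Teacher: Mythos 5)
Your proposal is correct, and it reaches the conclusion by a genuinely different route than the paper. The paper constructs $\widehat{\rho}$ by normalizing to the lifted cross-section $\widehat{\mathcal K}=\mathcal K\cap\{x_{m+1}=1\}$, solves the resulting system (the $r$ base normalization equations plus the gauge equation $x_{m+1}\,\mu(\widehat{\rho}(\boldsymbol{\hat x}),\boldsymbol x)=1$), identifies $\widehat{\rho}(\boldsymbol{\hat x})=\rho(\boldsymbol x)$ by uniqueness, and then separately checks right equivariance via $\widehat{\rho}=\rho\circ\pi$ and the preservation of the gauge condition using the cocycle identity. You instead take $\widehat{\rho}:=\rho\circ\pi$ as the definition and verify the two properties that the paper's own definition of a moving frame actually requires --- smoothness and the relation $\widehat{\rho}(g\cdot\boldsymbol{\hat x})=\widehat{\rho}(\boldsymbol{\hat x})\,g^{-1}$ --- via the chain $\rho(\pi(g\cdot\boldsymbol{\hat x}))=\rho(g\cdot\pi(\boldsymbol{\hat x}))=\rho(\boldsymbol x)g^{-1}$, which uses only Theorem~\ref{twisted-action}(ii) and the frame property of $\rho$. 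This is shorter, avoids the cocycle computation entirely, and sidesteps a genuine awkwardness in the paper's argument: the set $\mathcal K\cap\{x_{m+1}=1\}$ has codimension $r+1$ in $\widehat{\mathcal M}$, so the combined system is overdetermined in the group parameters and the gauge equation is not solvable for generic $x_{m+1}$; your supplementary cross-section $\widehat{\mathcal K}=\mathcal K\times\mathbb{R}^\times$ has the correct codimension $r$, yields literally the same normalization equations as on $\mathcal M$, and is moreover the choice consistent with the later use of $\hat{\iota}(x_{m+1})=x_{m+1}\,\mu(\rho(\boldsymbol x),\boldsymbol x)$ as a nontrivial generator in Theorem~\ref{gt}. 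What the paper's route buys in exchange is the explicit observation that the level set $x_{m+1}\,\mu(\rho(\boldsymbol x),\boldsymbol x)=1$ is preserved by the action, but that fact is not needed for the statement being proved.
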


\begin{proof}
Since the action of $G$ on $\mathcal M$ is free and regular in a neighborhood of $\boldsymbol x$, there exists
a local cross-section $\mathcal K \subset \mathcal M$ and a (locally) unique right moving frame
$\rho : \mathcal M \to G$, whose parameters are determined by the normalization equations
$\rho(\boldsymbol x) \cdot \boldsymbol x \in \mathcal K$ near $\boldsymbol x$.

Consider the lifted cross-section in the extended manifold,
\[
\widehat{\mathcal K} = \mathcal K \cap \{\, x_{m+1} = 1 \,\} \subset \widehat{\mathcal M}.
\]
By Theorem~\ref{twisted-action}, the action of $G$ on $\widehat{\mathcal M}$ remains free and regular,
and hence there exists a unique moving frame $\widehat{\rho}(\boldsymbol{\hat x}) \in G$ such that
\[
\widehat{\rho}(\boldsymbol{\hat x}) \cdot \boldsymbol{\hat x}
=
\Bigl(\widehat{\rho}(\boldsymbol{\hat x}) \cdot \boldsymbol x,\ 
x_{m+1}\,\mu\bigl(\widehat{\rho}(\boldsymbol{\hat x}), \boldsymbol x\bigr)\Bigr)
\in \widehat{\mathcal K}.
\]
Membership in $\widehat{\mathcal K}$ is equivalent to the system
\[
\widehat{\rho}(\boldsymbol{\hat x}) \cdot \boldsymbol x \in \mathcal K,
\qquad
x_{m+1}\,\mu\bigl(\widehat{\rho}(\boldsymbol{\hat x}), \boldsymbol x\bigr)=1.
\]
The second equation merely fixes a unique representative in the fiber with respect to the coordinate
$x_{m+1}$ and does not affect the solution of the normalization equations on $\mathcal M$.
The first equation uniquely determines the group element via the base coordinates and coincides
with the normalization equations defining $\rho(\boldsymbol x)$. By uniqueness, we obtain
\[
\widehat{\rho}(\boldsymbol{\hat x}) = \rho(\boldsymbol x).
\]

By definition of the projection $\pi$, we have
\[
\widehat{\rho}(\boldsymbol{\hat x}) = \rho(\boldsymbol x) = (\rho \circ \pi)(\boldsymbol{\hat x}).
\]

Since $\widehat{\rho} = \rho \circ \pi$ and $\pi$ is $G$-equivariant, it follows that
\[
\widehat{\rho}(g \cdot \boldsymbol{\hat x})
= \rho(\pi(g \cdot \boldsymbol{\hat x}))
= \rho(g \cdot \boldsymbol x)
= \rho(\boldsymbol x)\,g^{-1}
= \widehat{\rho}(\boldsymbol{\hat x})\,g^{-1},
\]
so right equivariance holds.

Finally, we verify that the image of a point remains in the cross-section $\widehat{\mathcal K}$.
Let $\boldsymbol{\hat x}' = g \cdot \boldsymbol{\hat x}
= (g \cdot \boldsymbol x,\ x_{m+1}\,\mu(g,\boldsymbol x))$.
Then $\widehat{\rho}(\boldsymbol{\hat x}') = \rho(g \cdot \boldsymbol x) = \rho(\boldsymbol x)\,g^{-1}$,
and by the cocycle identity for $\mu$,
\[
\mu\bigl(\rho(g \cdot \boldsymbol x),\,g \cdot \boldsymbol x\bigr)
=
\mu\bigl(\rho(\boldsymbol x)g^{-1},\,g \cdot \boldsymbol x\bigr)
=
\frac{\mu(\rho(\boldsymbol x),\boldsymbol x)}{\mu(g,\boldsymbol x)}.
\]
Therefore,
\[
x_{m+1}'\,\mu\bigl(\widehat{\rho}(\boldsymbol{\hat x}'),\pi(\boldsymbol{\hat x}')\bigr)
=
x_{m+1}\,\mu(\rho(\boldsymbol x),\boldsymbol x)
=1,
\]
so the gauge condition is preserved.
\end{proof}

\medskip

We now prove that invariantization of the multiplier produces a relative invariant of weight $-1$, and that its repeated invariantization yields the trivial absolute invariant.

\begin{theorem}\label{inv-jak}
Let $G$ act smoothly on $\mathcal M$, and let
$\mu : G \times \mathcal M \to \mathbb{R}^\times$ be a multiplier.
Suppose that a right moving frame $\rho( \boldsymbol x) \in G$ exists in a neighborhood of $\boldsymbol x \in \mathcal M$.
Then:

\begin{enumerate}
\item[$(i)$] The {invariantized multiplier}
\[
\mu\bigl(\rho(\boldsymbol x),\boldsymbol x\bigr)
\]
is a relative invariant of weight $-1$ with multiplier $\mu$, i.e.,
\[
\mu\bigl(\rho(g \cdot \boldsymbol x),\,g \cdot \boldsymbol x\bigr)
=
\mu(g,\boldsymbol x)^{-1}\,\mu\bigl(\rho(\boldsymbol x),\boldsymbol x\bigr),
\qquad \forall\, g\in G.
\]

\item[$(ii)$] The repeated invariantization of the multiplier equals $1$, namely
\[
\iota\!\Bigl(\mu\bigl(\rho(\boldsymbol x),\boldsymbol x\bigr)\Bigr)=1.
\]
\end{enumerate}
\end{theorem}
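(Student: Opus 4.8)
The plan is to prove both parts by direct computation, relying only on the defining equivariance property of the right moving frame, $\rho(g \cdot \boldsymbol x) = \rho(\boldsymbol x)\, g^{-1}$, together with the cocycle identity for $\mu$; no deeper machinery is needed, and part $(ii)$ will follow almost immediately once the idempotence of $\rho$ is in hand.

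For part $(i)$, I would start from the left-hand side $\mu(\rho(g \cdot \boldsymbol x),\, g \cdot \boldsymbol x)$ and substitute the equivariance relation $\rho(g \cdot \boldsymbol x) = \rho(\boldsymbol x)\, g^{-1}$, reducing it to $\mu(\rho(\boldsymbol x)\, g^{-1},\, g \cdot \boldsymbol x)$. The key maneuver is then to write $\rho(\boldsymbol x) = (\rho(\boldsymbol x)\, g^{-1})\, g$ and apply the cocycle identity $\mu(ab, \boldsymbol x) = \mu(a,\, b \cdot \boldsymbol x)\,\mu(b, \boldsymbol x)$ with $a = \rho(\boldsymbol x)\, g^{-1}$ and $b = g$. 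Since $ab = \rho(\boldsymbol x)$ and $b \cdot \boldsymbol x = g \cdot \boldsymbol x$, this yields $\mu(\rho(\boldsymbol x), \boldsymbol x) = \mu(\rho(\boldsymbol x)\, g^{-1},\, g \cdot \boldsymbol x)\,\mu(g, \boldsymbol x)$. Solving for the middle factor gives exactly $\mu(\rho(g \cdot \boldsymbol x),\, g \cdot \boldsymbol x) = \mu(g, \boldsymbol x)^{-1}\,\mu(\rho(\boldsymbol x), \boldsymbol x)$, which is the asserted weight $-1$ transformation law. This is the same identity already established in the proof of Theorem~\ref{rho}, so it may simply be quoted if one wishes.

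For part $(ii)$, the cleanest route is a direct evaluation of the invariantization. By definition $\iota(F)(\boldsymbol x) = F(\rho(\boldsymbol x) \cdot \boldsymbol x)$ with $F(\boldsymbol x) = \mu(\rho(\boldsymbol x), \boldsymbol x)$, so I would compute $\rho$ at the normalized point $\rho(\boldsymbol x) \cdot \boldsymbol x$. Applying equivariance with $g = \rho(\boldsymbol x)$ gives $\rho(\rho(\boldsymbol x) \cdot \boldsymbol x) = \rho(\boldsymbol x)\,\rho(\boldsymbol x)^{-1} = e$, which is precisely the idempotence $\iota^2 = \iota$ in disguise. Substituting back, $\iota(F)(\boldsymbol x) = \mu(e,\, \rho(\boldsymbol x) \cdot \boldsymbol x) = 1$ by the normalization $\mu(e, \cdot) = 1$. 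Alternatively, one may deduce $(ii)$ from $(i)$ by setting $g = \rho(\boldsymbol x)$ in the weight $-1$ law and using $F(\boldsymbol x) = \mu(\rho(\boldsymbol x), \boldsymbol x)$, so that the two factors cancel.

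There is no substantive obstacle here: both statements are bookkeeping exercises. The only points requiring care are the correct pairing of group elements in the cocycle identity in part $(i)$ — in particular, tracking at which base point each copy of $\mu$ is evaluated — and, in part $(ii)$, the legitimacy of substituting the point-dependent element $g = \rho(\boldsymbol x)$ into these identities, which is valid because the equivariance and transformation laws hold for every pair $(g, \boldsymbol x)$ simultaneously.
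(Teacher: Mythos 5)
Your proposal is correct and follows essentially the same route as the paper: part $(i)$ combines the equivariance $\rho(g\cdot\boldsymbol x)=\rho(\boldsymbol x)g^{-1}$ with the cocycle identity to obtain $\mu\bigl(\rho(\boldsymbol x)g^{-1},g\cdot\boldsymbol x\bigr)=\mu(\rho(\boldsymbol x),\boldsymbol x)/\mu(g,\boldsymbol x)$, and part $(ii)$ evaluates $\rho$ at the normalized point to get $\rho(\rho(\boldsymbol x)\cdot\boldsymbol x)=e$ and then uses $\mu(e,\cdot)=1$, exactly as in the paper. Your explicit factorization $\rho(\boldsymbol x)=(\rho(\boldsymbol x)g^{-1})\,g$ is just a cleaner way of stating the same cocycle substitution the paper performs.
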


\begin{proof}
$(i)$ Fix $g \in G$ and $\boldsymbol x \in \mathcal M$. Using equivariance of the moving frame,
\[
\rho(g \cdot \boldsymbol x) = \rho(\boldsymbol x)\,g^{-1},
\]
and substituting $h = g^{-1}$ and $g = \rho(\boldsymbol x)$ into the cocycle identity~\eqref{mult},
we obtain
\[
\mu\bigl(\rho(\boldsymbol x)g^{-1},\,g \cdot \boldsymbol x\bigr)
=
\frac{\mu(\rho(\boldsymbol x),\boldsymbol x)}{\mu(g,\boldsymbol x)}.
\]
Hence
\[
\mu\bigl(\rho(g \cdot \boldsymbol x),\,g \cdot \boldsymbol x\bigr)
=
\mu(g,\boldsymbol x)^{-1}\,\mu\bigl(\rho(\boldsymbol x),\boldsymbol x\bigr),
\]
which proves the claim.

$(ii)$  By definition of invariantization,
\[
\iota\!\Bigl(\mu\bigl(\rho(\boldsymbol x),\boldsymbol x\bigr)\Bigr)
=
\mu\Bigl(\rho(\rho(\boldsymbol x)\!\cdot\!\boldsymbol x),\,\rho(\boldsymbol x)\!\cdot\!\boldsymbol x\Bigr).
\]
Applying equivariance with $g=\rho(\boldsymbol x)$ gives
\[
\rho(\rho(\boldsymbol x)\!\cdot\!\boldsymbol x)=\rho(\boldsymbol x)\,\rho(\boldsymbol x)^{-1}=e,
\]
and therefore
\[
\iota\!\Bigl(\mu\bigl(\rho(\boldsymbol x),\boldsymbol x\bigr)\Bigr)
=
\mu\bigl(e,\,\rho(\boldsymbol x)\!\cdot\!\boldsymbol x\bigr)=1,
\]
since $\mu(e,\cdot)=1$.
\end{proof}

\medskip

We now establish the relationship between absolute and relative invariants.

\begin{theorem}\label{gt}
Let $G$ act smoothly on $\mathcal M$ with multiplier $\mu : G \times \mathcal M \to \mathbb{R}^\times$,
and let it act in the twisted manner on $\widehat{\mathcal M} = \mathcal M \times \mathbb{R}^\times$.
Assume that a right moving frame $\rho( \boldsymbol x) \in G$ exists in a neighborhood of a generic point.
Then:
\begin{enumerate}
\item[(i)] The invariant fields coincide:
\[
\mathcal F(\mathcal M)^G_\mu = \mathcal F(\widehat{\mathcal M})^G.
\]
\item[(ii)] If $\mathcal F(\mathcal M)^G$ is generated by fundamental invariants
$I_1(\boldsymbol x),\dots,I_{m-r}(\boldsymbol x)$, then
$\mathcal F(\mathcal M)^G_\mu$ is generated by
$I_1(\boldsymbol x),\dots,I_{m-r}(\boldsymbol x)$ together with the single relative invariant
$\mu(\rho(\boldsymbol x),\boldsymbol x)$ of weight $-1$.
In particular, any relative invariant of weight $\omega$ has the form
\[
A(\boldsymbol x)
=
\mu(\rho(\boldsymbol x),\boldsymbol x)^{-\omega}\,
F\bigl(I_1(\boldsymbol x),\dots,I_{m-r}(\boldsymbol x)\bigr),
\]
for some rational function $F$.
\end{enumerate}
\end{theorem}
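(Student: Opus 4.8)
The plan is to run the \emph{standard} moving frame algorithm on the extended manifold $\widehat{\mathcal M}$ and then translate the outcome back to $\mathcal M$ by means of the natural grading in the fibre coordinate $u = x_{m+1}$. By Theorem~\ref{twisted-action}(iii) the twisted action of $G$ on $\widehat{\mathcal M}$ is free and regular near every point of $\pi^{-1}(\boldsymbol x)$, and by Theorem~\ref{rho} the associated right moving frame is $\widehat\rho = \rho\circ\pi$, so that $\widehat\rho(\boldsymbol{\hat x}) = \rho(\boldsymbol x)$. Since $\dim\widehat{\mathcal M} - \dim G = (m+1)-r$, the minimal-generation result \cite[Theorem~4.5]{Olver1999-1} applied on $\widehat{\mathcal M}$ guarantees that the invariantizations of the coordinate functions yield $(m+1)-r$ functionally independent generators of $\mathcal F(\widehat{\mathcal M})^G$.

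First I would compute these invariantized coordinates explicitly. Because $\widehat\rho(\boldsymbol{\hat x})\cdot\boldsymbol{\hat x} = \bigl(\rho(\boldsymbol x)\cdot\boldsymbol x,\ u\,\mu(\rho(\boldsymbol x),\boldsymbol x)\bigr)$, invariantizing the base coordinates $x_1,\dots,x_m$ simply reproduces the base-manifold invariantization: $r$ of these become the normalization constants and the remaining $m-r$ are the fundamental invariants $I_1(\boldsymbol x),\dots,I_{m-r}(\boldsymbol x)$. Invariantizing the extra coordinate $u$ produces the single new generator $J := \iota(u) = u\,\mu(\rho(\boldsymbol x),\boldsymbol x)$. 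Hence $\mathcal F(\widehat{\mathcal M})^G$ is generated by $I_1,\dots,I_{m-r}$ together with $J$, and these $(m+1)-r$ functions are functionally independent, the $I_j$ being free of $u$ while $J$ is linear in $u$.

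Next I would set up the correspondence behind part~(i). The scaling $(\boldsymbol x,u)\mapsto(\boldsymbol x,\lambda u)$ commutes with the twisted $G$-action, so it induces a grading of $\mathcal F(\widehat{\mathcal M})^G$ by $u$-degree. A direct check shows that $A\mapsto u^{-\omega}A$ sends a weight-$\omega$ relative invariant on $\mathcal M$ to a $G$-invariant function on $\widehat{\mathcal M}$ homogeneous of degree $-\omega$ in $u$, and conversely that a homogeneous invariant $u^{k}\phi(\boldsymbol x)$ forces $\phi$ to be relative of weight $-k$. This assignment is multiplicative and weight-additive, hence extends to a field homomorphism $\Theta\colon\mathcal F(\mathcal M)^G_\mu\to\mathcal F(\widehat{\mathcal M})^G$; it is injective, since $u^{-\omega}A = u^{-\omega'}A'$ forces $\omega=\omega'$ and $A=A'$ upon comparing $u$-degrees, and surjective, since its image is a subfield containing all generators $I_1,\dots,I_{m-r}$ (images of the corresponding weight-$0$ invariants) and $J = \Theta\bigl(\mu(\rho(\boldsymbol x),\boldsymbol x)\bigr)$. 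This yields the identification in~(i).

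Finally, for the normal form in part~(ii) I would exploit homogeneity. Given a weight-$\omega$ relative invariant $A$, write its image $\widehat A = u^{-\omega}A$ as a rational function $\Phi(I_1,\dots,I_{m-r},J)$ of the generators. Applying the $u$-scaling and using that the $I_j$ are $u$-invariant while $J$ has $u$-degree $1$ gives the identity $\Phi(\mathbf I,\lambda J) = \lambda^{-\omega}\Phi(\mathbf I,J)$; as $I_1,\dots,I_{m-r},J$ are functionally independent they may be treated as free variables, forcing $\Phi(\mathbf I,t) = t^{-\omega}F(\mathbf I)$ with $F(\mathbf I) = \Phi(\mathbf I,1)$. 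Substituting $t = J = u\,\mu(\rho(\boldsymbol x),\boldsymbol x)$ and cancelling the common factor $u^{-\omega}$ produces $A(\boldsymbol x) = \mu(\rho(\boldsymbol x),\boldsymbol x)^{-\omega}\,F(I_1,\dots,I_{m-r})$, as claimed; that this is genuinely a weight-$\omega$ relative invariant follows from Theorem~\ref{inv-jak}(i). The step I expect to be the main obstacle is making this $u$-grading rigorous in the rational and, for $\omega\in\mathbb Q$, fractional-power setting: namely, justifying that a $G$-invariant function decomposes cleanly by $u$-degree and that the homogeneity relation can be read off on functionally independent generators. The completeness of $\{I_j, J\}$ and the surjectivity in~(i) are, by contrast, handed to us directly by the moving frame theorem on $\widehat{\mathcal M}$.
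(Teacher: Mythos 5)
Your proposal is correct and follows essentially the same route as the paper: pass to the extended manifold, use Theorems~\ref{twisted-action} and~\ref{rho} to invariantize the coordinates into the generators $I_1,\dots,I_{m-r}$ and $J=u\,\mu(\rho(\boldsymbol x),\boldsymbol x)$, and identify weight-$\omega$ relative invariants with absolute invariants on $\widehat{\mathcal M}$ via $A\mapsto u^{-\omega}A$. The only real divergence is in part (ii), where the paper sidesteps your homogeneity-in-$u$ extraction (and the delicacy you rightly flag about fractional powers and reading homogeneity off functionally independent generators) by simply observing that $A\cdot\mu(\rho(\boldsymbol x),\boldsymbol x)^{\omega}$ has weight $0$, hence is a function of the $I_j$ alone.
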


\begin{proof}

By Theorem~\ref{rho} and the choice of cross-section $\widehat{\mathcal K}$,
the invariantization of the coordinates of a point $\boldsymbol{\hat x}$ has the form
$$
\hat{\iota}(\boldsymbol{\hat x}) = \widehat{\rho}(\boldsymbol{\hat x}) \cdot \boldsymbol{\hat x}
= (\iota(\boldsymbol x),\ \widehat{\iota}(x_{m+1}))
= \left(\iota(\boldsymbol x),\ x_{m+1} \cdot \mu(\rho(\boldsymbol x), \boldsymbol x)\right).
$$
As shown above, the invariantized multiplier $\mu(\rho(\boldsymbol x), \boldsymbol x)$
is a relative invariant of weight $-1$.

\medskip

To prove the inclusion $\mathcal F(\mathcal M)^G_\mu \subseteq \mathcal F(\widehat{\mathcal M})^G$,
let $A \in \mathcal F(\mathcal M)^G_\mu$ be a relative invariant of weight $\omega$:
$$
A(g \cdot \boldsymbol x) = \mu(g, \boldsymbol x)^\omega\, A(\boldsymbol x).
$$
Define a function on $\widehat{\mathcal M}$ by
\[
\widehat{A}(\boldsymbol x, x_{m+1}) = x_{m+1}^{-\omega} A(\boldsymbol x).
\]
Then
\[
\widehat{A}(g \cdot \boldsymbol x,\ x_{m+1} \mu(g, \boldsymbol x))
= (x_{m+1} \mu(g, \boldsymbol x))^{-\omega} A(g \cdot \boldsymbol x)
= x_{m+1}^{-\omega} \mu(g, \boldsymbol x)^{-\omega} \mu(g, \boldsymbol x)^{\omega} A(\boldsymbol x)
= \widehat{A}(\boldsymbol x, x_{m+1}),
\]
so $\widehat{A} \in \mathcal F(\widehat{\mathcal M})^G$.

\medskip

To prove the inclusion $\mathcal F(\widehat{\mathcal M})^G \subseteq \mathcal F(\mathcal M)^G_\mu$,
let $F \in \mathcal F(\widehat{\mathcal M})^G$. Since invariantization of coordinates generates all invariants,
$F$ can be written as a function of the invariantized coordinates:
$$
F(\boldsymbol x, x_{m+1}) = \Phi\bigl(\hat{\iota}(\boldsymbol{\hat x})\bigr)
= \Phi\bigl(\iota(\boldsymbol x),\ \mu(\rho(\boldsymbol x), \boldsymbol x)\bigr),
$$
for some function $\Phi$. Since $\rho(\boldsymbol x) \cdot \boldsymbol x$ in normalized coordinates depends
only on the fundamental absolute invariants $I_1(\boldsymbol x), \dots, I_{m-r}(\boldsymbol x)$,
there exists a function $\widetilde{\Phi}$ such that
$$
F(\boldsymbol x, x_{m+1}) = \widetilde{\Phi}\bigl(I_1(\boldsymbol x),\dots,I_{m-r}(\boldsymbol x),\ \mu(\rho(\boldsymbol x), \boldsymbol x)\bigr).
$$
Since the $I_j(\boldsymbol x)$ are absolute invariants and $\mu(\rho(\boldsymbol x), \boldsymbol x)$ is a relative invariant,
it follows that $F \in \mathcal F(\mathcal M)^G_\mu$.

From the two inclusions, we conclude that the invariant fields coincide:
$$
\mathcal F(\mathcal M)^G_\mu = \mathcal F(\widehat{\mathcal M})^G,
$$
which proves part (i).

\medskip

From the previous steps, it follows that any invariant on $\widehat{\mathcal M}$—and hence any relative
invariant on $\mathcal M$—is a rational function of the fundamental absolute invariants
$I_1(\boldsymbol x), \dots, I_{m-r}(\boldsymbol x)$ and the relative invariant
$\mu(\rho(\boldsymbol x), \boldsymbol x)$.
Let $A$ be a relative invariant of weight $\omega$. Then the product
$A \cdot \mu(\rho(\boldsymbol x), \boldsymbol x)^{\omega}$ is an absolute invariant,
say $F(I_1(\boldsymbol x), \dots, I_{m-r}(\boldsymbol x))$ for some function $F$ in $m - r$ variables.
Thus,
$$
A = \mu(\rho(\boldsymbol x), \boldsymbol x)^{-\omega}
\, F(I_1(\boldsymbol x), \dots, I_{m-r}(\boldsymbol x)),
$$
which proves part (ii).
\end{proof}

\medskip

The proposed construction does not alter the standard moving frame algorithm:
the new approach is fully compatible with the classical theory while providing an
{algorithmic} procedure for constructing relative invariants entirely within the
original moving frame framework.

\medskip

A general description of the structure of scalar relative invariants is already implicit in the
classical theory. In particular, it is shown in \cite{Olver1995} (where the scalar multiplier is
referred to as a {weight function}) that once a nonvanishing relative invariant \(R_0\) of a given
weight is fixed, any other relative invariant of the same weight is obtained by multiplying \(R_0\)
by an absolute invariant. Theorem~\ref{gt} refines this statement in our setting by providing a
canonical choice of such a distinguished generator: namely, the invariantized multiplier
\(\mu(\rho(\boldsymbol x),\boldsymbol x)\) produced by the moving frame.

\medskip

Interestingly, any nonvanishing function $F \in \mathcal F(\mathcal M)$ is a relative invariant
for the group action with multiplier $d^0 F$.
In particular, this observation provides a direct link to the realizability problem for
\(1\)-cocycles. Namely, a cocycle \(\mu\in Z^{1}(G,\mathcal F(\mathcal M)^\times)\) is said to be realizable
(as a multiplier) if there exists a nonvanishing function \(F\in\mathcal F(\mathcal M)^\times\) such that
\(\mu=d^{0}F\).
Our remark shows that every such \(F\) automatically yields a relative invariant 
with multiplier \(d^{0}F\), while conversely, \(\mu\) is realizable precisely when it arises in this way.
Therefore, in the setting considered in this paper (regular actions admitting a moving frame and the
rational function class), the triviality of the first cohomology implies that {every} multiplier
\(1\)-cocycle is realizable, and hence can be incorporated into the moving-frame construction of relative
invariants.

\section{Relative Invariants of the Plane Projective Group and Integral Invariants}

Let us consider a practical application of the algorithm for constructing relative invariants in image analysis tasks.

Let \(F \in \mathcal{F}(\mathcal{M})\). Suppose that for some neighborhood \(U \subset \mathcal{M}\), the integral
$$
\int_U F(\boldsymbol{x})\,d\boldsymbol{x}, \quad \text{where} \quad d\boldsymbol{x} = dx_1 \cdots dx_n,
$$
is defined. We call this integral an {integral \(G\)-invariant} if for every \(g \in G\), its value remains unchanged under the change of variables \(\boldsymbol{x} \mapsto g \cdot \boldsymbol{x}\), i.e.,
$$
\int_{g \cdot U} F(\boldsymbol{g \cdot \boldsymbol{x}})\,d\boldsymbol{x}
\;=\;
\int_U F(\boldsymbol{x})\,d\boldsymbol{x}.
$$
Applying the change of variables formula to the left-hand side yields the condition:
$$
\int_{U} F(g \cdot \boldsymbol{x})\, J(g, \boldsymbol{x})\,d\boldsymbol{x}
\;=\;
\int_U F(\boldsymbol{x})\,d\boldsymbol{x},
$$
where \(J(g, \boldsymbol{x}) = \det \frac{\partial (g \cdot \boldsymbol{x})}{\partial \boldsymbol{x}}\) is the Jacobian of the transformation.

Thus, a sufficient condition for integral invariance is the invariance of the integrand \(F(\boldsymbol{x})\). This is equivalent to the requirement:
$$
F(g \cdot \boldsymbol{x})\,J(g, \boldsymbol{x}) = F(\boldsymbol{x})
\quad \Longleftrightarrow \quad
F(g \cdot \boldsymbol{x}) = J(g, \boldsymbol{x})^{-1} F(\boldsymbol{x}).
$$
Therefore, integral invariants naturally arise when the integrand \(F(\boldsymbol{x})\) is a relative invariant of weight \(\omega=-1\) with respect to the Jacobian multiplier \(J(g, \boldsymbol{x})\).

The proposed construction is deeply rooted in the theory of invariant Haar measures on homogeneous spaces, as the condition $F(g \cdot \boldsymbol{x}) = |J(g, \boldsymbol{x})|^{-1} F(\boldsymbol{x})$ effectively defines a density $F(\boldsymbol{x})$ that transforms the standard Lebesgue measure $d\boldsymbol{x}$ into a $G$-invariant measure $d\lambda = F(\boldsymbol{x}) d\boldsymbol{x}$ on the manifold $\mathcal{M}$. This framework ensures the consistent definition of geometric features that remain independent of the coordinate system or camera viewpoint.

\medskip

Note that integral invariants play a key role in image analysis. A grayscale image is mathematically modeled as a piecewise-continuous real-valued positive function \(u(x,y)\) defined on a compact domain \(\Omega \subset \mathbb{R}^2\), such that
$$
 \int_{\Omega} u(x,y)\,dx\,dy < \infty.
$$
The value of the function \(u(x,y)\) is interpreted as the brightness (intensity) of the pixel at the corresponding coordinates.

Integral, and more generally, moment invariants of the affine group of the plane and its subgroups are global features of the image and are widely used in pattern recognition tasks (see~\cite{FSB}). 
However, the situation becomes significantly more complicated when transitioning from the affine to the projective group of the plane. All known attempts to extend the affine theory by defining projective analogs of moments and constructing invariant expressions based on them have so far been unsuccessful. In fact, the problem of complete characterization of projective invariants of images under general projective transformations remains open (see~\cite{Open}).

In the next subsection, we provide a complete description of the field of relative joint projective invariants and, using them, construct a new family of joint projective invariants of images.

\subsection{Image Joint Projective Invariants}

Let \(G = PGL(3,\mathbb{R})\) act on \(\mathbb{R}^2\) via projective transformations
\begin{equation}\label{pgl-action-point}
g:(x,y)\ \longmapsto\
\left(
  \frac{a_1x+a_2y+a_3}{c_1x+c_2y+c_3},\;
  \frac{b_1x+b_2y+b_3}{c_1x+c_2y+c_3}
\right),
\qquad
g=\begin{pmatrix}
a_1&a_2&a_3\\
b_1&b_2&b_3\\
c_1&c_2&c_3
\end{pmatrix}\in GL(3,\mathbb{R}),
\end{equation}
where matrices are considered up to a nonzero scalar multiple.

Consider the induced diagonal action of \(G\) on the configuration space
\(\mathcal M = (\mathbb{R}^2)^n\):
$$
g:\ (x_i,y_i)\ \longmapsto\ g\cdot(x_i,y_i),\qquad i=1,\dots,n.
$$

For a single point \((x_i,y_i)\), the Jacobian of the planar projective transformation \eqref{pgl-action-point} is
$$
J_i(g,(x_i,y_i))
\;=\;
\frac{\det(g)}{\bigl(c_1x_i+c_2y_i+c_3\bigr)^3},
\qquad
\det(g)=
\begin{vmatrix}
a_1 & a_2 & a_3 \\
b_1 & b_2 & b_3 \\
c_1 & c_2 & c_3
\end{vmatrix}.
$$
Hence, for the diagonal action on \(\mathcal M\), the \emph{total Jacobian multiplier} is the product
$$
J(g,\bs x)\;=\;\prod_{i=1}^n J_i\bigl(g,(x_i,y_i)\bigr),
\qquad
\bs x=(x_1,y_1,\dots,x_n,y_n)\in\mathcal M.
$$

We then construct a moving frame \(\rho(\bs x)\) for the projective action on \(\mathcal M\)
(on an open subset of general position), and then invariantize the Jacobian multiplier \(J(g,\bs x)\). This yields a relative invariant of weight \(-1\), namely the invariantized Jacobian
\(\iota(J)(g,\bs x)=J(\rho(\bs x),\bs x)\), which can be combined with absolute invariants
to form projective integral invariants of the image data.

\subsection{Construction of the Moving Frame}

The projective group of the plane, $PGL(3, \mathbb{R})$, is an 8-parameter Lie group. Since each point in the projective plane provides two coordinates, a configuration of $n$ points in general position forms a $2n$-dimensional manifold. To ensure that the group action is locally free, the dimension of this configuration space must satisfy $2n \geq 8$. Thus, we consider the case $n=4$, which provides a sufficient number of equations to uniquely determine all group parameters.

We fix the value of the parameter \(c_3 = 1\) and choose the following cross-section of the manifold \(\mathcal{M}\):

$$
\mathcal{K} = \{ x_1 = 0,\; y_1 = -1,\; x_2 = 1,\; y_2 = 1,\; x_3 = 1,\; y_3 = 0,\; x_4 = 0,\; y_4 = 0 \}
$$

Solving the normalization equations yields the following values of the moving frame parameters:

\begin{gather*}
a_{{1}}=\frac {(y_1-y_4)\delta_{123} \delta_{234}}{\Delta}, \quad
a_{{2}}=-\frac {(x_1-x_{{4}}) \delta_{234} \delta_{123}}{\Delta}, \quad
a_{{3}}=\frac {\delta_{123} \delta_{234} \begin{vmatrix} x_1 & y_1 \\ x_4 & y_4\end{vmatrix}}{\Delta},
\\
b_{{1}}=-\frac { (y_3-y_4)\delta_{123} \delta_{124}}{\Delta}, \quad 
b_{{2}}={\frac {(x_3-x_4)\delta_{123} \delta_{124}}{\Delta}}, \quad
b_{{3}}=-{\frac { \delta_{123} \delta_{124} \begin{vmatrix} x_3 & y_3 \\ x_4 & y_4\end{vmatrix} }{\Delta}},
\\
c_1=\frac{(y_1-y_4) \delta_{123} \delta_{234}+(y_2-y_3) \delta_{124} \delta_{134}}{\Delta
}, \quad
c_2=-\frac{(x_1-x_4) \delta_{123} \delta_{234}+(x_2-x_3) \delta_{124} \delta_{134}}{\Delta
}.
\end{gather*}
where   
$$ \delta_{ijk}=\begin{vmatrix} x_i & x_j & x_k \\ y_i & y_j & y_k \\ 1 & 1 & 1 \end{vmatrix},  \quad \Delta= \delta_{123} \delta_{234} \begin{vmatrix} x_1 & y_1 \\ x_4 & y_4\end{vmatrix}+\delta_{134} \delta_{124} \begin{vmatrix} x_2 & y_2 \\ x_2 & y_2\end{vmatrix}.$$

We now invariantize the coordinate functions \(x_i, y_i\) for \(i = 5, \ldots, n\), which were not involved in the normalization. This yields
\begin{gather*}
\iota(x_i) = \frac{\delta_{234} \delta_{123} \delta_{14i}}{\delta_{234} \delta_{123} \delta_{14i} + \delta_{134} \delta_{124} \delta_{23i}}, \\
\iota(y_i) = -\frac{\delta_{124} \delta_{123} \delta_{34i}}{\delta_{234} \delta_{123} \delta_{14i} + \delta_{134} \delta_{124} \delta_{23i}}.
\end{gather*}

These functions generate the field of absolute invariants. However, we attempt to find a simpler set of generators. We compute:
\begin{gather*}
\iota(x_i)^{-1} = \frac{\delta_{234} \delta_{123} \delta_{14i} + \delta_{134} \delta_{124} \delta_{23i}}{\delta_{234} \delta_{123} \delta_{14i}} = 1 + \frac{\delta_{134} \delta_{124} \delta_{23i}}{\delta_{234} \delta_{123} \delta_{14i}}, \\
\frac{\iota(x_i)}{\iota(y_i)} = -\frac{\delta_{234} \delta_{14i}}{\delta_{124} \delta_{34i}}.
\end{gather*}

Thus, the field of joint absolute invariants of the planar projective group is generated by the following \(2(n-4)\) invariants:
$$
I^{(1)}_i(\boldsymbol{x}) = \frac{\delta_{134} \delta_{124} \delta_{23i}}{\delta_{234} \delta_{123} \delta_{14i}}, \qquad
I^{(2)}_i(\boldsymbol{x}) = \frac{\delta_{234} \delta_{14i}}{\delta_{124} \delta_{34i}}, \qquad i = 5, \ldots, n.
$$
For \(n = 4\), there are no absolute invariants.

Now consider the multiplier, which is the Jacobian of the projective transformation:
$$
\mu(g, \boldsymbol{x}) = J_n(g, \boldsymbol{x}) = \frac{\det(g)^n}{\prod_{i=1}^n s_i^3} = \prod_{i=1}^n \frac{\det(g)}{s_i^3}, \quad s_i = c_1 x_i + c_2 y_i + 1.
$$

According to Theorem~\ref{inv-jak}, its invariantization \(J(g \cdot \boldsymbol{x}, \boldsymbol{x})\) is a relative invariant of weight \(-1\). Substituting the moving frame parameters into the expressions for the multipliers yields the invariantized Jacobian:

\begin{gather*}
\iota\left(\frac{\det(g)}{s_1^3} \right)=-\frac{\delta_{234}^2}{\delta_{123} \delta_{124} \delta_{134}}, \iota\left(\frac{\det(g)}{s_2^3} \right)=\frac{\delta_{134}^2}{\delta_{123} \delta_{124} \delta_{234}}, \\
\iota\left(\frac{\det(g)}{s_3^3} \right)=\frac{\delta_{124}^2}{\delta_{123} \delta_{234} \delta_{134}}, \iota\left(\frac{\det(g)}{s_4^3} \right)=-\frac{\delta_{123}^2}{\delta_{124} \delta_{134} \delta_{234}},
\end{gather*}
and for  $i>4$:
$$
\iota\left(\frac{\det(g)}{s_i^3} \right)=-\frac{(\delta_{123} \delta_{124} \delta_{134} \delta_{234})^{2}}{(\delta_{123}  \delta_{234} \delta_{14i}+\delta_{124} \delta_{134}  \delta_{23i})^3}.
$$

Thus, the invariantized Jacobian (after  simplifications and ignoring the overall sign) takes the form
\begin{equation}\label{n>4}
J\bigl(g\!\cdot\!\bs x,\bs x\bigr)
=
\bigl(\delta_{123}\delta_{124}\delta_{134}\delta_{234}\bigr)^{2n-1}
\prod_{i=5}^{n}
\frac{1}{\bigl(\delta_{123}\delta_{234}\delta_{14i}+\delta_{124}\delta_{134}\delta_{23i}\bigr)^3},
\qquad n>4,
\end{equation}
and for  $n=4$:
\begin{equation*}\label{n=4}
J\bigl(g\!\cdot\!\bs x,\bs x\bigr)
=
\bigl(\delta_{123}\delta_{124}\delta_{134}\delta_{234}\bigr)^{-1}.
\end{equation*}

\medskip

Although the relative invariance of the invariantized Jacobian is guaranteed by the very existence of a moving frame,
it is instructive to see explicitly the mechanism by which the Jacobian multiplier arises.
Technically, everything reduces to the fact that each determinant $\delta_{ijk}$ is itself a relative invariant: under the action of
$g \in PGL(3,\mathbb{R})$ it transforms according to the rule
\begin{equation}\label{delta-transform}
\delta_{ijk}\ \longmapsto\ \frac{\det(g)}{s_i s_j s_k}\,\delta_{ijk}.
\end{equation}

It follows that in the explicit formulas for the fundamental absolute invariants
$I^{(1)}_i(\boldsymbol{x})$ and $I^{(2)}_i(\boldsymbol{x})$
all such factors cancel pairwise, which explains their invariance.
Of course, these invariants could also be obtained without the moving frame method;
however, the moving frame approach immediately guarantees their functional completeness
and the absence of additional independent generators.

For relative invariants the situation is analogous, but now one must track the total multiplier
that reproduces the Jacobian to the power $-1$.
The key point here is the transformation of the mixed combination
\begin{equation}\label{mixed-transform}
\delta_{123}\delta_{234}\delta_{14i}+\delta_{124}\delta_{134}\delta_{23i}
\ \longmapsto\
\frac{\det(g)^3}{s_1^2 s_2^2 s_3^2 s_4^2 s_i}\,
\bigl(\delta_{123}\delta_{234}\delta_{14i}+\delta_{124}\delta_{134}\delta_{23i}\bigr),
\end{equation}
which shows that this sum behaves as a relative invariant with a well-defined multiplier.
It is precisely due to \eqref{delta-transform}--\eqref{mixed-transform} that all factors arising
from the group action on the right-hand side of \eqref{n>4} ultimately combine into the overall
Jacobian multiplier raised to the power $-1$.

\medskip

Hence, the field of joint relative projective invariants with the Jacobian multiplier is generated
by $2n-8$ absolute invariants together with the invariantized Jacobian:
$$
\mathcal F(\mathcal M)^{PGL(3,\mathbb{R})}_{J(g,\boldsymbol{x})}
=\mathbb{R}\bigl(
I^{(1)}_5(\boldsymbol{x}),\ I^{(2)}_5(\boldsymbol{x}),\ldots,
I^{(1)}_n(\boldsymbol{x}),\ I^{(2)}_n(\boldsymbol{x}),
J(g\!\cdot\!\boldsymbol{x},\boldsymbol{x})
\bigr),
\qquad \dim \mathcal M = 2n \ge 8.
$$
For $n=4$ there are no absolute invariants, and the field of relative invariants is generated solely
by the invariantized Jacobian.

It is interesting to note that for $n=3$ the diagonal projective action of $PGL(3,\mathbb{R})$ on
$(\mathbb{R}^2)^3$ is not locally free, since
$$
\dim(\mathbb{R}^2)^3 = 6 < \dim PGL(3,\mathbb{R}) = 8,
$$
and therefore a moving frame in the standard sense does not exist.
Nevertheless, the determinant
$$
\delta_{123} =
\begin{vmatrix}
x_1 & x_2 & x_3 \\
y_1 & y_2 & y_3 \\
1   & 1   & 1
\end{vmatrix}
$$
is a relative invariant of weight $-\tfrac{1}{3}$.
This example highlights the fact that relative invariants may exist even in situations
where the lack of local freeness or regularity prevents the existence of a moving frame.

\subsection{Image projective invariants}

The obtained description of the field of relative invariants with the Jacobian multiplier allows us to derive a complete description of 
{integral projective invariants of images}.

Let $u(x,y)$ be an image function defined on a compact domain in $\mathbb{R}^2$.
Consider $n$ copies of this image,
\[
u_i(x_i,y_i) = u(x_i,y_i),\qquad i=1,\dots,n.
\]
We assume a {trivial} action of the projective group on these copies:
$$
g\cdot u_i \;=\; u_i,
$$
This is merely a formalization of the fact that the pixel intensity values of the image remain unchanged under projective deformations of the domain. Hence,
each \(u_i\) is an absolute invariant under the induced action on the extended space
\((\mathbb{R}^2\times\mathbb{R})^n\).

For $\bs \alpha =(\alpha_1, \ldots, \alpha_n),\ \bs \beta =(\beta_1, \ldots, \beta_n)\in \mathbb{Z}^{n},\ n>4$, define the following family of integral invariants:
\begin{equation*}
\mathcal I_{\bs \alpha, \bs \beta}\ =\ 
\int_{\mathbb{R}^{2n}}
J\bigl(g\!\cdot\!\bs x,\bs x\bigr)\,
\prod_{i=1}^{n} \left( I_i^{(1)}(\bs x)\right)^{\alpha_i}\, \left(I_i^{(2)}(\bs x)\right)^{\beta_i} u(x_i,y_i)\,
dx_1\,dy_1\cdots dx_n\,dy_n.
\end{equation*}
To unify notation, we set $ I_i^{(1)}(\bs x) = I_i^{(2)}(\bs x) = 1$ for $i \leq 4.$

All functions $u_i$ are absolute invariants, so we may include them in the integrand. This ensures that the integral invariant depends on the image and serves as a global feature that aggregates information from the entire image.

\medskip

The constructed invariants are primarily of theoretical interest: these are high-dimensional integrals, and their direct computation on real images is computationally expensive without the use of heuristic techniques. 
Integral projective joint invariants of this type were first considered in~\cite{Suk2000}, where relative invariants were obtained for the cases $n=3,4$, and a general construction scheme was proposed for arbitrary $n$, although the question of describing all relative invariants was not addressed.

\section{Conclusions}

This work presents a constructive modification of the method of moving frames, adapted for the construction of relative invariants under smooth Lie group actions. The theoretical framework justifies the transition from the problem of finding relative invariants on the base manifold $\mathcal{M}$ to the problem of constructing absolute invariants on the extended space $\widehat{\mathcal{M}} = \mathcal{M} \times \mathbb{R}^\times$. It is proven that the twisted group action on $\widehat{\mathcal{M}}$, defined via a $1$-cocycle (multiplier), preserves the freeness and regularity of the original action.

A fundamental connection is established between the classical moving frame $\rho$ on $\mathcal{M}$ and the structure of relative invariants. It is shown that the invariantization of the multiplier $J(\boldsymbol{x}) = \mu(\rho(\boldsymbol{x}), \boldsymbol{x})$ serves as a canonical generator of weight $-1$ relativity. This enables a universal normal form for any relative invariant, expressed as a product of this generator raised to the appropriate power and a function of the fundamental absolute invariants.

The proposed approach is applied to the relevant problem of constructing projective invariants of images. For the diagonal action of the projective plane group $PGL(3, \mathbb{R})$, an explicit form of the invariantized Jacobian and fundamental joint invariants is derived. Based on this, a new family of integral projective invariants (image joint projective invariants) is constructed, capable of aggregating image intensity information in a way that is invariant under projective deformations of the domain. This construction provides a theoretical foundation for the development of new image recognition algorithms robust to perspective distortions.

Future research directions include extending this methodology to higher-order projective groups, as well as developing efficient algorithms for computing the resulting invariants.

\end{document}